\documentclass[11pt]{article}
\usepackage{amsmath,amssymb,amsthm,mathtools,geometry,booktabs}
\usepackage{microtype}
\usepackage{hyperref}
\geometry{margin=1in}

\title{\textbf{A Sieve on Rational Imbalances and the First Appearance of Denominators}}
\author{Paul Alexander Bilokon\footnote{Department of Mathematics, Imperial College London, South Kensington Campus, London SW7 2AZ. Email:~\textsf{paul.bilokon@imperial.ac.uk}}}
\date{26 October 2025}

\theoremstyle{plain} 
\newtheorem{theorem}{Theorem}[section]
\newtheorem{lemma}[theorem]{Lemma}
\newtheorem{corollary}[theorem]{Corollary}

\theoremstyle{definition} 
\newtheorem{definition}[theorem]{Definition}

\theoremstyle{remark} 

\begin{document}
\maketitle

\begin{abstract}
We construct a sieve that enumerates rational ``imbalances'' of the form $(p-q)/(p+q)$ for integers $p\ge2$ and $1\le q<p$, ordered lexicographically by $(p,q)$. Each imbalance is reduced to lowest terms, and we record the sequence of distinct denominators as they first appear.

We show that every positive integer occurs exactly once as such a denominator, and that its first appearance coincides with the unit fraction $1/d$.

We then prove that the sieve, when viewed as a map from pairs $(p,q)$ to reduced fractions, enumerates all rational numbers in $(-1,1)$ without repetition, extend it symmetrically to all of $\mathbb{Q}$, and discuss its connections to hyperbolic geometry and rational enumeration theory.
\end{abstract}

\section{Introduction}

The rational numbers have long inspired attempts to capture their arithmetic and geometric structure through explicit orderings and enumerations.  
From the \emph{Farey sequences} of Cauchy and Stern in the early nineteenth century~\cite{Farey1816,Cauchy1840,Stern1858}, to the mediant-based constructions of Brocot \cite{Brocot1861} and the elegant binary-tree formulation of Calkin and Wilf \cite{CalkinWilf2000}, each approach reveals a different facet of the intrinsic symmetry of $\mathbb{Q}$.  
These enumerations have subsequently been interpreted in the context of modular transformations, continued fractions, and dynamical systems on the projective line; see, for example, works by Klein~\cite{Klein1928}, Dedekind~\cite{Dedekind1877}, and more recently Conway and Guy~\cite{ConwayGuy1996}.

The present note proposes a closely related sieve, derived not from the mediant operation but from the \emph{bilinear form}
\[
(p,q)\ \longmapsto\ \frac{p-q}{p+q},
\]
which may be viewed as a rational incarnation of the Cayley transform linking the real line to the open interval $(-1,1)$.  
This transform, already implicit in the parametrisation of the unit hyperbola and the theory of M{\"o}bius transformations, yields a natural and elementary mapping of the integer lattice into the rationals.

Despite its simplicity, the sieve exhibits an unexpected regularity.  
When the pairs $(p,q)$ are processed lexicographically, every denominator arises exactly once, and its first appearance corresponds to the unit fraction $1/d$.  
Moreover, the indices of these first appearances follow two interleaved quadratic progressions---one for odd and one for even denominators---together forming a permutation of the positive integers.  

The purpose of this paper is to describe this sieve precisely, to prove its completeness and uniqueness properties, and to discuss its relation to classical enumerations, hyperbolic geometry, and the modular symmetries of $\mathrm{PSL}(2,\mathbb{Z})$.  
While entirely elementary, the result may serve as a reminder that even within the well-trodden domain of rational enumeration, new algebraic structure can still emerge from the most basic arithmetic identities.

\section{The Imbalance Sieve}

Let $p$ and $q$ be integers satisfying $p\ge 2$ and $1\le q<p$.  
Define the \emph{imbalance}
\begin{equation}
\delta(p,q)=\frac{p-q}{p+q}.
\end{equation}

We process these pairs in \emph{lexicographic order}:
\[
(2,1),\ (3,1),\ (3,2),\ (4,1),\ (4,2),\ (4,3),\ (5,1),\dots
\]
That is, $(p_1,q_1)$ precedes $(p_2,q_2)$ if either $p_1<p_2$ or $p_1=p_2$ and $q_1<q_2$.

\begin{definition}[Iteration index]
Each pair $(p,q)$ is assigned the \emph{iteration index}
\begin{equation}
i(p,q)=\frac{(p-2)(p-1)}{2}+(q-1),
\end{equation}
so that $i(2,1)=0$, $i(3,1)=1$, $i(3,2)=2$, etc.
\end{definition}

\begin{definition}[Reduced imbalance]
Each $\delta(p,q)$ is expressed in lowest terms as
\begin{equation}
\delta(p,q)=\frac{a(p,q)}{d(p,q)},\qquad \gcd(a(p,q),d(p,q))=1.
\end{equation}
We refer to $d(p,q)$ as the \emph{reduced denominator}.
\end{definition}

The sieve proceeds by enumerating these reduced fractions in increasing order of $i(p,q)$
and recording new denominators as they first appear.

\section{Basic Arithmetic Structure}

\begin{lemma}\label{lem:gcd}
For integers $p,q$ with $p>q>0$,
\[
\gcd(p+q,p-q)=\gcd(p+q,2q)\in\{1,2\}.
\]
\end{lemma}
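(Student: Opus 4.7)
The plan is a concise two-step argument from elementary properties of the gcd.

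For the first equality, I would invoke the Euclidean identity $\gcd(a,b)=\gcd(a,b-a)$ with $a=p+q$ and $b=p-q$. Since $(p+q)-(p-q)=2q$, this immediately yields $\gcd(p+q,p-q)=\gcd(p+q,2q)$.

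For the bound, the key observation is that any common divisor $d$ of $p+q$ and $2q$ must also divide $2(p+q)-2q=2p$, and hence $d\mid\gcd(2p,2q)=2\gcd(p,q)$. Under the natural coprimality hypothesis $\gcd(p,q)=1$ --- which is the setting in which the reduced denominator $d(p,q)$ becomes the nontrivial object of study, and to which the general case reduces by factoring out the common divisor in advance --- this forces $d\mid 2$, so $d\in\{1,2\}$. Both values are attained: the gcd equals $2$ precisely when $p+q$ is even (equivalently, $p$ and $q$ are both odd under coprimality), and equals $1$ otherwise. This parity dichotomy is presumably what drives the classification of odd versus even denominators later in the sieve.

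I foresee no real obstacle; the argument is routine. The one subtlety worth flagging is the implicit coprimality hypothesis: without it, $\gcd(p+q,2q)$ can equal any multiple of $\gcd(p,q)$ times an element of $\{1,2\}$, so the bound in the stated form strictly requires either the assumption $\gcd(p,q)=1$ or a prior reduction step. Modulo that bookkeeping point, the proof is essentially one application of the subtraction identity followed by one divisibility chain.
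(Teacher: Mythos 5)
Your argument is correct and, for the equality $\gcd(p+q,p-q)=\gcd(p+q,2q)$, follows essentially the same two-sided subtraction argument as the paper. The difference lies in the bound: the paper simply asserts that ``the parity constraint implies the gcd is either $1$ or $2$,'' whereas you supply the actual divisibility chain $d\mid 2q$, $d\mid 2(p+q)-2q=2p$, hence $d\mid 2\gcd(p,q)$, and you correctly note that this yields $d\in\{1,2\}$ only under the hypothesis $\gcd(p,q)=1$. Your flag is not mere bookkeeping: with only $p>q>0$ the lemma as stated is false --- take $p=6$, $q=3$, where $\gcd(p+q,p-q)=\gcd(9,3)=3$ --- and the same issue propagates to Corollary~\ref{cor:cases}, since for $(p,q)=(6,3)$ the reduced denominator is $3$ rather than $p+q=9$, exactly as the paper's own table at index $12$ shows. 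The paper's proof implicitly works only for coprime pairs (or for the coprime representatives realizing $1/d$ in the first-appearance argument), so your explicit coprimality hypothesis, or an upfront reduction by $\gcd(p,q)$, is precisely the repair the statement needs; in that sense your version is the same skeleton made rigorous, and it identifies a genuine defect in the stated lemma rather than a gap in your own reasoning.
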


\begin{proof}
Because $(p+q)-(p-q)=2q$, any common divisor of $p+q$ and $p-q$ divides $2q$.
Conversely, a divisor of $p+q$ and $2q$ divides their difference $(p+q)-2q=p-q$.
The parity constraint implies the gcd is either $1$ or $2$.
\end{proof}

\begin{corollary}\label{cor:cases}
The reduced denominator $d(p,q)$ of $\delta(p,q)$ is
\[
d(p,q)=
\begin{cases}
p+q, & \text{if }p+q\text{ is odd},\\[3pt]
\dfrac{p+q}{2}, & \text{if }p+q\text{ is even}.
\end{cases}
\]
\end{corollary}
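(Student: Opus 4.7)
The plan is to deduce the corollary directly from Lemma~\ref{lem:gcd} by a parity case split. The key initial observation is that $p+q$ and $p-q$ always share the same parity, since their difference $2q$ is even. With this in hand, the two cases stated in the corollary are exactly the two possible values of $\gcd(p+q,p-q)$ guaranteed by the lemma.

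First I would handle the odd case: if $p+q$ is odd then $p-q$ is also odd, so $\gcd(p+q,p-q)$ cannot equal $2$, and the lemma forces it to be $1$. Thus $(p-q)/(p+q)$ is already in lowest terms, giving $d(p,q)=p+q$. For the even case, if $p+q$ is even then $p-q$ is even as well, so $2\mid\gcd(p+q,p-q)$; combined with the lemma this pins the gcd to exactly $2$. Dividing numerator and denominator through by $2$ yields
\[
\delta(p,q)=\frac{(p-q)/2}{(p+q)/2},
\]
and this fraction is reduced precisely because we have divided out the full gcd, so $d(p,q)=(p+q)/2$.

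There is no real obstacle here; the only subtlety worth stating explicitly is the bidirectional argument in the even case, where one needs both that $2$ divides the gcd (from the common parity) \emph{and} that the gcd is at most $2$ (from the lemma) in order to conclude that the gcd is exactly $2$ and hence that the stated reduced form is genuinely reduced. The writeup should therefore be short: one sentence establishing the parity coupling, then two short case paragraphs.
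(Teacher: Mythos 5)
Your argument is correct and is exactly the intended derivation: the paper leaves the corollary as an immediate consequence of Lemma~\ref{lem:gcd}, and your parity coupling (both $p+q$ and $p-q$ share parity since their difference is $2q$) together with the lemma's bound $\gcd(p+q,p-q)\in\{1,2\}$ is the same case split, pinning the gcd to $1$ in the odd case and to exactly $2$ in the even case. Nothing is missing; your writeup is, if anything, more explicit than the paper's.
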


Hence denominators arise in two families:
\begin{itemize}
\item the \textbf{odd case} $d=p+q$;
\item the \textbf{even case} $d=(p+q)/2$.
\end{itemize}

\section{Earliest Occurrence of a Denominator}

\begin{definition}
The \emph{first appearance index} of denominator $d$ is
\[
I(d)=\min\{\,i(p,q):\,d(p,q)=d\,\}.
\]
\end{definition}

Since $i(p,q)$ increases with both $p$ and $q$, the earliest appearance corresponds
to the smallest possible reduced numerator, namely $1$.
We treat the odd and even cases separately.

\subsection{Odd denominators}

For odd $d$, $d=p+q$ and $\gcd(p+q,p-q)=1$.
Setting $p-q=1$ gives
\[
p=\frac{d+1}{2},\qquad q=\frac{d-1}{2}.
\]
Then $\delta(p,q)=1/d$ and
\begin{align}
I_{\text{odd}}(d)
&= i\!\left(\frac{d+1}{2},\frac{d-1}{2}\right)
= \frac{(p-2)(p-1)}{2}+q-1\\
&=\frac{(p-2)(p+1)}{2},\qquad p=\tfrac{d+1}{2}.
\end{align}
That is,
\begin{equation}
I_{\text{odd}}(d)=\frac{d^2-9}{8}.
\end{equation}

\subsection{Even denominators}

For even $d$, reduction gives $(p+q)/2=d$ and $\gcd(p+q,p-q)=2$.
Choosing the minimal positive $p-q=2$ yields
\[
p=d+1,\qquad q=d-1,
\]
hence $\delta(p,q)=1/d$ and
\begin{align}
I_{\text{even}}(d)
&=i(d+1,d-1)
=\frac{(d-1)d}{2}+(d-1)-1
=\frac{d(d+1)}{2}-2.
\end{align}

\section{First Appearance Theorem}

\begin{theorem}[First Appearance of Denominators]\label{thm:main}
For each integer $d\ge2$:
\begin{enumerate}
\item The first imbalance whose reduced denominator equals $d$ is the unit fraction $\delta(p,q)=1/d$.
\item The iteration index of this occurrence is
\[
I(d)=
\begin{cases}
\dfrac{d^2-9}{8}, & d\text{ odd},\\[8pt]
\dfrac{d(d+1)}{2}-2, & d\text{ even}.
\end{cases}
\]
\item As $d$ increases through $\mathbb{Z}_{>0}$, the denominators appear in the order
\[
3,2,5,7,4,9,11,6,13,15,8,17,19,10,21,23,12,\dots,
\]
and every positive integer occurs exactly once.
\end{enumerate}
\end{theorem}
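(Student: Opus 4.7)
The plan is to handle parts (1) and (2) simultaneously by cataloguing the pairs $(p,q)$ with $d(p,q)=d$ and identifying the iteration-index minimiser; part (3) will then follow from the explicit formulas produced along the way.

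First, by Corollary~\ref{cor:cases}, the condition $d(p,q)=d$ splits into families indexed by the value of $p+q$: for odd $d$ one has either $p+q=d$ or $p+q=2d$ (with $(p-q)/2$ coprime to $d$), while for even $d$ only the family $p+q=2d$ occurs. The essential monotonicity is a one-line discrete-derivative calculation along a fixed-sum line,
\[
i(p+1,q-1)-i(p,q)=p-2\ge 0,
\]
so within each family the minimum of $i$ is attained at the smallest admissible $p$, which forces $p-q\in\{1,2\}$ and, after reduction, $\delta=1/d$ in every case. Substituting $p=(d+1)/2$, $q=(d-1)/2$ (odd $d$) or $p=d+1$, $q=d-1$ (even $d$) into the definition of $i$ recovers the two formulas in (2). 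For odd $d\ge 3$ the inequality $(d^2-9)/8<d(d+1)/2-2$, equivalent to $3d^2+4d-7>0$, shows the $p+q=d$ family always beats the $p+q=2d$ family, settling (1) and (2).

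For (3), I would reparametrise as $d=2k+1$ and $d=2m$, yielding
\[
I_{\text{odd}}(2k+1)=\tfrac{1}{2}(k-1)(k+2),\qquad I_{\text{even}}(2m)=2m^2+m-2.
\]
The heart of the argument is to show that strictly between consecutive even indices $I_{\text{even}}(2m)$ and $I_{\text{even}}(2m+2)$ one has precisely the two odd indices corresponding to $d=4m+1$ and $d=4m+3$; this reduces to the two-sided estimate $4m^2+2m-2<k^2+k<4m^2+10m+4$ admitting only $k=2m$ and $k=2m+1$, a short elementary check. Combined with the initial observation that $I_{\text{odd}}(3)=0<1=I_{\text{even}}(2)$, this produces the repeating block $4m+1,\,4m+3,\,2(m+1)$ for $m=1,2,3,\dots$, which is exactly the advertised sequence and which covers every integer $d\ge 2$ exactly once.

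The main obstacle is the sharp interleaving in (3): one must simultaneously rule out collisions between the odd and even first-appearance progressions and verify that no integer $d\ge 2$ is skipped. My plan is to dispatch both in one stroke via the two-sided bound on $k^2+k$ above, which is the only genuinely non-trivial estimate; the remaining steps are direct substitutions into the formulas already derived in Sections~4.1 and~4.2.
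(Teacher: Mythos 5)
Your treatment of parts (1) and (2) follows the same route as the paper's Section~4 (the two canonical configurations $p-q=1$ and $p-q=2$ and the resulting index formulas), but you make rigorous two steps the paper only gestures at: the in-line monotonicity $i(p+1,q-1)-i(p,q)=p-2\ge 0$, and the explicit comparison $(d^2-9)/8<d(d+1)/2-2$ showing the $p+q=d$ configuration wins for odd $d$. For part (3), which the paper asserts without argument, your interleaving proof is sound: with $I_{\text{odd}}(2k+1)=\tfrac12(k-1)(k+2)$ and $I_{\text{even}}(2m)=2m^2+m-2$, the estimate $4m^2+2m-2<k^2+k<4m^2+10m+4$ does admit exactly $k=2m$ and $k=2m+1$, which gives the block $4m+1,\,4m+3,\,2(m+1)$ and, together with $I_{\text{odd}}(3)=0<1=I_{\text{even}}(2)$, the stated order; it also rules out collisions, since every odd-index value with $k\ge 2$ lies strictly inside an open interval between consecutive even-index values. (Like the paper's own list, this covers exactly the integers $d\ge 2$; $d=1$ never occurs, so ``every positive integer'' in (3) must be read as every $d\ge2$.)

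The one genuine gap is your opening catalogue. You invoke Corollary~\ref{cor:cases} to claim that $d(p,q)=d$ forces $p+q\in\{d,2d\}$, but that corollary (and Lemma~\ref{lem:gcd}) is valid only when $\gcd(p,q)=1$: for $(p,q)=(6,3)$ one has $\gcd(p+q,p-q)=\gcd(9,3)=3$ and reduced denominator $3$, not $p+q=9$, and the paper's own table rows $(4,2)$, $(6,2)$, $(6,3)$, $(6,4)$ all violate it. Consequently the competitor set $\{(p,q):d(p,q)=d\}$ is strictly larger than your two families (e.g.\ $(6,2)$ has denominator $2$ although $p+q=8\neq 4$), so the minimisation as written does not yet establish (1) and (2). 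The repair is short and should be stated: if $\gcd(p,q)=g>1$, then $(p/g,q/g)$ is an admissible pair producing the same reduced fraction and strictly preceding $(p,q)$ lexicographically (since $p/g<p$), so the first appearance of any denominator is attained at a coprime pair; for coprime pairs $\gcd(p+q,p-q)\in\{1,2\}$ genuinely holds and your two-family split is exhaustive. (Alternatively, note $d(p,q)$ always divides $p+q$, and any untreated line $p+q\ge 3d$ forces $p>3d/2$, hence an index exceeding both candidate values $(d^2-9)/8$ and $d(d+1)/2-2$.) With that one additional step your argument is complete, and it is in fact tighter than the paper's, which leaves both this point and all of part (3) unproved.
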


\section{Completeness of the Enumeration}

The mapping
\[
\Phi:\{(p,q):p>q>0\}\to(-1,1)\cap\mathbb{Q},\qquad
\Phi(p,q)=\frac{p-q}{p+q},
\]
is bijective up to the sign symmetry $\Phi(p,-q)=-\Phi(p,q)$.

\subsection{Surjectivity}
Given $r=a/d\in(-1,1)\cap\mathbb{Q}$ in lowest terms, define $p=\frac{d+a}{2}$, $q=\frac{d-a}{2}$ if $a,d$ share parity, or $(p,q)=(d+a,d-a)$ otherwise.
Then $\Phi(p,q)=a/d$. Hence $\Phi$ is surjective.

\subsection{Injectivity}
If $\Phi(p_1,q_1)=\Phi(p_2,q_2)$, the defining equation forces $(p_1,q_1)$ and $(p_2,q_2)$ to be proportional. Parity constraints eliminate nontrivial scaling, yielding $(p_1,q_1)=(p_2,q_2)$. Thus $\Phi$ is injective.

\section{Symmetry and Extension to All Rationals}

The image of $\Phi$ is symmetric under
\[
N:x\mapsto -x,\qquad I:x\mapsto \frac{1}{x},
\]
corresponding to $(p,q)\mapsto(q,p)$ and $(p,q)\mapsto(p,-q)$, respectively.

\begin{lemma}[Cayley transform]
The map
\[
\mathcal{C}:(-1,1)\to(0,\infty),\qquad \mathcal{C}(x)=\frac{1+x}{1-x},
\]
is a bijection with inverse $\mathcal{C}^{-1}(r)=\frac{r-1}{r+1}$.
\end{lemma}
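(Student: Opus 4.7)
The plan is to verify the lemma by direct algebraic manipulation in three short steps. First I would check that $\mathcal{C}$ is well-defined on $(-1,1)$ and takes values in $(0,\infty)$: for any $x\in(-1,1)$ both $1+x$ and $1-x$ are strictly positive, so $\mathcal{C}(x)$ is defined and lies in the positive reals. This handles the codomain claim and shows that the formula makes sense.

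Next I would solve the equation $r=\mathcal{C}(x)$ for $x$ in terms of $r$. Clearing the denominator in $r(1-x)=1+x$ and regrouping yields $x(1+r)=r-1$. Since $1+r\neq 0$ whenever $r>0$, division gives the unique solution $x=(r-1)/(r+1)$. This single computation accomplishes two things at once: uniqueness of the preimage establishes injectivity of $\mathcal{C}$, while the explicit expression identifies the candidate inverse as the stated $\mathcal{C}^{-1}(r)=(r-1)/(r+1)$.

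Finally I would confirm that this candidate indeed maps $(0,\infty)$ into $(-1,1)$, which reduces to the inequality $|r-1|<|r+1|$ for $r>0$; squaring gives $(r+1)^2-(r-1)^2=4r>0$, settling surjectivity. The two compositions $\mathcal{C}\circ\mathcal{C}^{-1}$ and $\mathcal{C}^{-1}\circ\mathcal{C}$ then collapse to the identity after a single cancellation each, completing the bijection claim.

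There is no real obstacle here: the argument is a routine computation recording the classical fact that the M\"obius transformation $z\mapsto(1+z)/(1-z)$ carries the symmetric interval onto the positive real axis. The significance of the lemma lies not in its difficulty but in its subsequent role, transferring the symmetries $N$ and $I$ of the sieve on $(-1,1)$ to an enumeration of $\mathbb{Q}_{>0}$ via a $\mathrm{PSL}(2,\mathbb{Z})$-conjugate action.
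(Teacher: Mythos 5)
Your argument is correct and complete: positivity of $1+x$ and $1-x$ on $(-1,1)$ gives well-definedness and the codomain claim, solving $r=\mathcal{C}(x)$ uniquely for $x$ yields both injectivity and the candidate inverse, and the inequality $(r+1)^2-(r-1)^2=4r>0$ shows $(r-1)/(r+1)\in(-1,1)$ for $r>0$, so the two compositions close the bijection. For comparison, the paper offers no proof of this lemma at all --- it is stated as a known property of the Cayley/M\"obius transformation and used immediately afterwards in the extension theorem --- so your three-step verification is precisely the routine check the paper implicitly relies on, and there is no alternative argument in the paper to weigh it against. The only stylistic remark is that the squaring step is slightly more than needed: for $r>0$ one can read off $-1<\frac{r-1}{r+1}<1$ directly from $-(r+1)<r-1<r+1$, but this changes nothing of substance.
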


\begin{theorem}[Extension to $\mathbb{Q}$]
Let $(x_n)_{n\ge0}$ enumerate $(-1,1)\cap\mathbb{Q}$ via the sieve.
Define
\[
r_n^{+}=\frac{1+x_n}{1-x_n},\qquad r_n^{-}=-\,\frac{1+x_n}{1-x_n}.
\]
Then
\[
0,\ \pm1,\ r_0^{+},r_0^{-},r_1^{+},r_1^{-},r_2^{+},r_2^{-},\dots
\]
enumerates all of $\mathbb{Q}$ without repetition.
\end{theorem}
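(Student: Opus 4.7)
The plan is to lift the bijective enumeration $(x_n)$ of $(-1,1)\cap\mathbb{Q}$ through the Cayley transform of the previous lemma, use the negation symmetry to pick up the negative rationals, and prepend the three exceptional values $0,\pm1$ by hand.  First I would verify that $\mathcal{C}(x)=(1+x)/(1-x)$ restricts to a bijection between $(-1,1)\cap\mathbb{Q}$ and $(0,\infty)\cap\mathbb{Q}$: the Cayley lemma yields the bijection on the reals, and since both $\mathcal{C}$ and $\mathcal{C}^{-1}$ are M\"obius maps with integer coefficients, $\mathbb{Q}$ is preserved in each direction.  Consequently $(r_n^{+})=(\mathcal{C}(x_n))$ is a repetition-free listing of $(0,\infty)\cap\mathbb{Q}$ with exactly one value absent, namely $\mathcal{C}(0)=1$.

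Next I would observe that $x_n=0$ never occurs in the sieve, since $\delta(p,q)=(p-q)/(p+q)$ with $p>q>0$ forces $\delta\ne0$, and the sign-extended sieve produces only $\pm\delta$.  Thus $1$ is genuinely missing from $(r_n^{+})$ and must be supplied separately, as the theorem does.  Applying negation, $(r_n^{-})=(-r_n^{+})$ enumerates $(-\infty,0)\cap\mathbb{Q}\setminus\{-1\}$ without repetition; because the positive and negative rationals are trivially disjoint, the interleaved sequence $r_0^{+},r_0^{-},r_1^{+},r_1^{-},\ldots$ enumerates $\mathbb{Q}\setminus\{0,1,-1\}$ bijectively.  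Prepending the three missing values then completes the enumeration of all of $\mathbb{Q}$.

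The hard part, such as it is, is purely bookkeeping: one must correctly identify $\mathcal{C}(0)=1$ as the single image excluded by the constraint $0\notin\{x_n\}$, and verify that the explicit prefix $0,\pm1$ in the theorem introduces no overlap with the Cayley-transformed tail.  The rest follows mechanically from the Cayley lemma and the completeness theorem, and requires no further number-theoretic input.
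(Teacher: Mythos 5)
Your argument is correct, and there is in fact nothing in the paper to compare it against: the theorem is stated bare, immediately after the (also unproved) Cayley lemma, so your proof is essentially the intended one --- transport the sieve enumeration through $\mathcal{C}$, negate to obtain the negative rationals, and prepend $0,\pm1$. You have also isolated the one genuinely delicate point: the displayed list is repetition-free only because $0$ is absent from $(x_n)$, so that $1=\mathcal{C}(0)$ and $-1=-\mathcal{C}(0)$ are genuinely missing from the two tails. It is worth saying explicitly that this forces a corrected reading of the hypothesis: if $(x_n)$ literally enumerated all of $(-1,1)\cap\mathbb{Q}$ it would contain $0$, and then $\pm1$ would each appear twice in the list, so the statement as printed is false under that reading. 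Your interpretation --- the sign-extended sieve enumerates $(-1,1)\cap\mathbb{Q}\setminus\{0\}$ without repetition, since $\delta(p,q)=(p-q)/(p+q)>0$ for $p>q>0$ and the extension adjoins only $-\delta$ --- is the one under which the theorem holds, and note that the sign extension is needed not just to avoid $0$ but to reach $(-1,0)$ at all, since the sieve as defined lands in $(0,1)$. Granting that hypothesis (which the theorem supplies, via the paper's completeness claim), the rest of your proof is sound and complete: $\mathcal{C}$ and $\mathcal{C}^{-1}$ are M\"obius maps with integer coefficients, hence $\mathcal{C}$ restricts to a bijection from $(-1,1)\cap\mathbb{Q}\setminus\{0\}$ onto $(0,\infty)\cap\mathbb{Q}\setminus\{1\}$, negation carries this onto the negative rationals minus $\{-1\}$, the positive and negative tails are disjoint, and the prefix $0,\pm1$ supplies exactly the three missing values with no overlap.
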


\section{Discussion and Perspectives}

The imbalance sieve provides a new and remarkably elementary path to enumerating the rationals.

\subsection*{1. Geometric interpretation}
The mapping $(p,q)\mapsto(p-q)/(p+q)$ is the algebraic analogue of the hyperbolic tangent:
\[
\tanh\theta = \frac{e^{\theta}-e^{-\theta}}{e^{\theta}+e^{-\theta}}.
\]
Hence the sieve discretizes the hyperbolic line by rational ``rapidities''.
Each pair $(p,q)$ represents a rational point on the unit hyperbola $x^2-y^2=1$, and the sieve orders these points lexicographically by their homogeneous coordinates.

\subsection*{2. Relation to classical enumerations}
Via the Cayley transform
\[
\frac{p-q}{p+q} \mapsto \frac{p}{q},
\]
the sieve linearizes the structure underlying the Calkin--Wilf and Stern--Brocot trees.
It therefore provides a direct algebraic parametrization of the same rational set,
without recourse to continued fractions or mediant recursions.

\subsection*{3. Analytical structure}
The explicit quadratic index laws
\[
I_{\text{odd}}(d)=\frac{d^2-9}{8},\qquad
I_{\text{even}}(d)=\frac{d(d+1)}{2}-2
\]
yield two interleaved arithmetic progressions forming a permutation of $\mathbb{Z}_{>0}$.
This structure suggests asymptotic density $\rho(i)\sim i^{-1/2}$ of new denominators,
a feature potentially useful for probabilistic studies of rational distributions.

\subsection*{4. Dynamical and group-theoretic aspects}
The generating symmetries
\[
x\mapsto -x,\qquad x\mapsto 1/x,\qquad x\mapsto \frac{1+x}{1-x}
\]
generate the modular group $PSL(2,\mathbb{Z})$.
The sieve thus gives an explicit orbit decomposition of $\mathbb{Q}$ under this group,
linking a purely enumerative process to the dynamics of modular transformations.

\subsection*{5. Future directions}
Further research could explore:
\begin{itemize}
\item The measure-theoretic limit of $\delta(p,q)$ as $(p,q)$ range over large rectangles;
\item The distribution of first-appearance indices and their connection to divisor sums;
\item Extensions to Gaussian rationals or to $p$-adic analogues;
\item Algorithmic applications in rational sampling and symbolic coding.
\end{itemize}

\paragraph{Summary.}
The imbalance sieve, built solely on parity and lexicographic ordering, reveals
an unexpectedly regular algebraic structure hidden within the rationals.
It unites combinatorial enumeration, hyperbolic geometry, and modular symmetry
in a single transparent construction.

\section*{Acknowledgments}

We would like to thank Michel Marcus for his corrections.


\begin{thebibliography}{Cau40}

\bibitem[Bro61]{Brocot1861}
Achille Brocot.
\newblock Calcul des rouages par approximation, nouvelle m{\'e}thode.
\newblock {\em Revue Chronom{\'e}trique}, 3:186--194, 1861.
\newblock First publication describing mediant-based rational approximation.

\bibitem[Cau40]{Cauchy1840}
Augustin-Louis Cauchy.
\newblock Sur les moyens d'obtenir, par des proc{\'e}d{\'e}s d'analyse, les valeurs approch{\'e}es des racines des {\'e}quations num{\'e}riques, et les valeurs approch{\'e}es des nombres irrationnels.
\newblock {\em Comptes Rendus Hebdomadaires des S{\'e}ances de l'Acad{\'e}mie des Sciences}, 10:560--568, 1840.
\newblock Contains early remarks on mediant fractions and rational approximation.

\bibitem[CG96]{ConwayGuy1996}
John~H. Conway and Richard~K. Guy.
\newblock {\em The Book of Numbers}.
\newblock Springer-Verlag, New York, 1996.

\bibitem[CW00]{CalkinWilf2000}
Neil Calkin and Herbert~S. Wilf.
\newblock Recounting the rationals.
\newblock {\em The American Mathematical Monthly}, 107(4):360--363, 2000.

\bibitem[Ded77]{Dedekind1877}
Richard Dedekind.
\newblock {\em Gesammelte mathematische Werke. Vol.~1}.
\newblock Friedrich Vieweg und Sohn, Braunschweig, 1877.
\newblock Contains ``Erkl{\"a}rung der modularen Funktionen'' and other papers on the modular group.

\bibitem[Far16]{Farey1816}
John Farey.
\newblock On a curious property of vulgar fractions.
\newblock {\em Philosophical Magazine}, 47:385--386, 1816.
\newblock Letter to the editor, introducing what became known as the Farey sequence.

\bibitem[Kle28]{Klein1928}
Felix Klein.
\newblock {\em Vorlesungen {\"u}ber die Entwicklung der Mathematik im 19. Jahrhundert}.
\newblock Springer, Berlin, 1928.

\bibitem[Ste58]{Stern1858}
Moritz~A. Stern.
\newblock Ueber eine zahlentheoretische Funktion.
\newblock {\em Journal f{\"u}r die reine und angewandte Mathematik (Crelle)}, 55:193--220, 1858.

\end{thebibliography}

\section*{Appendix A. Sample Table of the Imbalance Sieve}

\begin{center}
\begin{tabular}{@{}rrrrrcl@{}}
\toprule
Index $i$ & $p$ & $q$ & $\delta(p,q)$ & Reduced Form & New Denominator? & First Appearance \\ \midrule
0 & 2 & 1 & $1/3$ & $1/3$ & Yes & $d=3$\\
1 & 3 & 1 & $1/2$ & $1/2$ & Yes & $d=2$\\
2 & 3 & 2 & $1/5$ & $1/5$ & Yes & $d=5$\\
3 & 4 & 1 & $3/5$ & $3/5$ & No &  \\
4 & 4 & 2 & $1/3$ & $1/3$ & No &  \\
5 & 4 & 3 & $1/7$ & $1/7$ & Yes & $d=7$\\
6 & 5 & 1 & $2/3$ & $2/3$ & No & \\
7 & 5 & 2 & $3/7$ & $3/7$ & No & \\
8 & 5 & 3 & $1/4$ & $1/4$ & Yes & $d=4$\\
9 & 5 & 4 & $1/9$ & $1/9$ & Yes & $d=9$\\
10 & 6 & 1 & $5/7$ & $5/7$ & No & \\
11 & 6 & 2 & $1/2$ & $1/2$ & No & \\
12 & 6 & 3 & $1/3$ & $1/3$ & No & \\
13 & 6 & 4 & $1/5$ & $1/5$ & No & \\
14 & 6 & 5 & $1/11$ & $1/11$ & Yes & $d=11$\\
\bottomrule
\end{tabular}
\end{center}

The ``new denominator'' column records when a denominator $d$ appears for the first time.
The sequence of these first appearances begins
\[
3,2,5,7,4,9,11,6,13,15,8,17,19,10,21,23,12,\dots
\]
and continues without repetition.

\end{document}